\documentclass[12pt]{amsart}
\usepackage[utf8]{inputenc}
\usepackage{amsmath}
\usepackage{amssymb}
\usepackage{amsfonts}
\usepackage{amsthm}
\usepackage{mathrsfs} 
\usepackage{bm}
\usepackage{bbm}
\usepackage{tikz}
\usepackage{centernot}
\usepackage{hyperref}
\usepackage[margin=1.2in]{geometry}

\setlength{\parskip}{\baselineskip}

\renewcommand{\P}{\Bbb{P}}

\newcommand{\ep}{\epsilon}

\hypersetup{
    colorlinks=true,
    linkcolor=blue,
    filecolor=magenta,
    urlcolor=blue,
    citecolor=blue
}

\makeatletter
\newtheorem*{rep@theorem}{\rep@title}
\newcommand{\newreptheorem}[2]{%
\newenvironment{rep#1}[1]{%
 \def\rep@title{#2 \ref{##1}}%
 \begin{rep@theorem}}%
 {\end{rep@theorem}}}
\makeatother

\newtheorem{thm}{Theorem}
\newreptheorem{thm}{Theorem}

\newtheorem{result}{Result}[section]

\newtheorem{prp}[result]{Proposition}

\theoremstyle{definition}

\newtheorem*{ack}{Acknowledgements}

\theoremstyle{remark}



\newcommand{\hide}[1]{}

\newcommand{\rough}[1]{}
\definecolor{darkgreen}{RGB}{75,150,75}
\newcommand{\review}[1]{}

\newcommand{\hides}[1]{}
\newcommand{\pub}[1]{}

\title{A note on large induced subgraphs with prescribed residues in bipartite graphs}
\author{Zach Hunter}
\email{zachary.hunter@exeter.ox.ac.uk}
\date{\today}

\begin{document}

\maketitle
\begin{abstract}
    It was proved by Scott that for every $k\ge2$, there exists a constant $c(k)>0$ such that for every bipartite $n$-vertex graph $G$ without isolated vertices, there exists an induced subgraph $H$ of order at least $c(k)n$ such that $\deg_H(v) \equiv 1\pmod{k}$ for each $v \in H$. Scott conjectured that $c(k) = \Omega(1/k)$, which would be tight up to the multiplicative constant. We confirm this conjecture.
\end{abstract}

\section{Introduction}

Given a graph $G$ and integers $q>r\ge 0$, we define $f(G,r,q)$ to be the maximum order of an induced subgraph $H$ of $G$ where $\deg_H(v) \equiv r \pmod{q}$ for all $v \in H$ (or if no such $H$ exists, we set $f(G,r,q) = 0$). 

There are many questions and conjectures concerning the behavior of $f(G,r,q)$ for various $G,r,q$. An old unpublished result of Gallai in this area is that\footnote{Actually what Gallai proved was slightly stronger. He showed that for each graph $G$, we can partition $V(G)$ into two parts $A,B$ so that $\deg_{G[A]}(v) \equiv 0\pmod{2}$ (respectively $\deg_{G[B]}(v) \equiv 0 \pmod{2}$) for each $v\in A$ (respectively $v\in B$).} $f(G,0,2)\ge n/2$ for every $n$-vertex graph (see \cite[Excercise~5.17]{lovasz} for a proof). Further questions about the behavior of $f$ received attention around 20-30 years ago (see e.g., \cite{caro,caro2,scott,scott2}). And more recently, this topic has had a minor renaissance (see e.g., \cite{balister,ferber,ferber2}).

This note will focus on an old result of Scott. For positive integer $k$, we define $c(k)$ to be $\inf_G\{ f(G,1,k)/|G|\}$ where $G$ ranges over all bipartite graphs with $\delta(G)\ge 1$. The following was proved by Scott:
\begin{thm}\cite[Lemma~8]{scott2} Let $k\ge 2$. Then
\[1/(2^k+k+1) \le c(k) \le  1/k.\]
\end{thm}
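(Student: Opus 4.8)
For $c(k)\le 1/k$ I would use a single extremal family: let $G$ be a disjoint union of $m$ copies of the complete bipartite graph $K_{k,k}$, so $\delta(G)=k\ge1$ and $|V(G)|=2km$. Since $f(\cdot,1,k)$ and the order are both additive over connected components, it suffices to check that $f(K_{k,k},1,k)=2$. If $H$ is an induced subgraph of one copy, with sides $A,B$ of size $k$, meeting it in $a$ vertices of $A$ and $b$ of $B$, then every vertex of $H\cap A$ has degree $b$ and every vertex of $H\cap B$ has degree $a$; for nonempty $H$ this forces $a,b\ge1$ and $a\equiv b\equiv1\pmod k$, hence $a=b=1$ since $1\le a,b\le k$. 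Thus $f(G,1,k)=2m=|V(G)|/k$.

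\textbf{The lower bound.} Let $G=(X\cup Y,E)$ be bipartite with $\delta(G)\ge1$; by the same additivity we may assume $G$ is connected, and after relabelling $|X|\ge|Y|$. The one structural fact I would build everything on is that \emph{any induced matching is automatically a valid subgraph}: if $M$ is an induced matching then $G[V(M)]=M$, in which every degree equals $1\equiv1\pmod k$. So we are done at once if $G$ has an induced matching with at least $|V(G)|/(2^k+k+1)$ edges. In the complementary regime I would split on the maximum degree. If $\Delta(G)\le k$ then (using $k\ge2$) every vertex of a valid $H$ has degree exactly $1$, so $f(G,1,k)$ equals twice the induced matching number, and a greedy argument — each chosen edge forbids only $O(k^2)$ others — together with a direct check for the few smallest $k$ gives the bound. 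If some degrees exceed $k$, partition $X$ into its high-degree part (degree $\ge k$) and low-degree part (degree $<k$): take a random $Y'\subseteq Y$ and keep the high-degree $x$ with $|N(x)\cap Y'|\equiv1\pmod k$ — an $\Omega(1/k)$-fraction in expectation — while for the low-degree vertices pigeonhole over the at most $2^k$ possible neighbourhood patterns so that a $2^{-k}$-fraction all request the same small value; then delete the $O(k)$ vertices of $Y'$ whose residues get spoiled, and check the deletion does not cascade. The counting — $2^k$ patterns, $k-1$ low degrees, a base edge — is what one tunes to reach $2^k+k+1$.

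\textbf{The main obstacle.} The genuine difficulty throughout is the two-sidedness of the constraint: a set $Y'\subseteq Y$ chosen so that every retained $x\in X$ has $|N(x)\cap Y'|\equiv1\pmod k$ will typically leave some $y\in Y'$ with the wrong residue, and repairing those $y$'s changes the $X$-degrees again, so the naive alternating procedure can collapse to the empty graph. The crux of the argument is to organise it — via the case split above or some equivalent reduction — so that only $O(k)$ vertices ever need repairing and so that repairing them is provably safe; establishing precisely this non-cascading property, and keeping the losses small enough to land at $2^k+k+1$ rather than something larger, is where I expect essentially all of the work to be.
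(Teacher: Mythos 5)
Your upper bound is correct and is exactly the example the paper alludes to: a single $K_{k,k}$ already gives ratio $1/k$, and your residue computation $a\equiv b\equiv 1\pmod k$ with $1\le a,b\le k$ forcing $a=b=1$ is right. The case $\Delta(G)\le k$ of your lower bound is also sound: there every valid subgraph is an induced matching, and the greedy bound $f\ge n/(k^2+k)$ beats $n/(2^k+k+1)$ for all $k\ge 2$. The problems are all in your main case, and they are genuine gaps, not presentational ones.

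First, the probability claim is false where you need it. For a uniformly random $Y'\subseteq Y$ and a vertex $x$ of degree $d$, $\P\bigl(|N(x)\cap Y'|\equiv 1\pmod k\bigr)=2^{-d}\sum_{j\equiv 1\,(k)}\binom{d}{j}$, which for $d=k$ equals $k2^{-k}$ --- exponentially small, not $\Omega(1/k)$. The mixing estimate only kicks in for $d=\omega(k^2)$ (the paper's Proposition~\ref{equi} uses $d\ge k^3$), so splitting $X$ at degree $k$ puts the threshold in the wrong place; this is precisely why Scott's bound has a $2^k$ in it and why improving it required more than this dichotomy. Second, your pigeonhole over ``at most $2^k$ possible neighbourhood patterns'' of the low-degree vertices has no basis: a vertex of degree $<k$ has its neighbourhood among the $\binom{|Y|}{<k}$ subsets of $Y$, and nothing in your setup confines these neighbourhoods to a common $k$-element ground set. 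Third, and most seriously, the repair step you defer is the whole theorem. After you fix $Y'$ and the retained part of $X$, it is not $O(k)$ vertices of $Y'$ whose residues are wrong --- generically almost all of them are --- and deleting any of them changes the degrees of the retained $X$-vertices, which is exactly the cascade you name as ``the main obstacle'' and then do not resolve. The standard way to kill the cascade (used by Scott and, in sharpened form, in this paper's proof of Theorem~\ref{main}) is to iterate minimal dominating sets $W_1\supseteq W_2\supseteq\cdots\supseteq W_{k-1}$ inside $Y$, peeling off private-neighbour sets $S_1,\dots,S_{k-1}$ in $X$: every $u$ in the final set $W_{k-1}$ then owns $k-1$ vertices of $\bigcup_i S_i$ each of which is adjacent, within the sets that survive to the final subgraph, to $u$ alone and to nothing else. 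One never deletes spoiled vertices of the $Y$-side; one raises each of their degrees to $1\bmod k$ by adding some of these private neighbours, independently for each $u$ and without disturbing any other residue. Your proposal contains no substitute for this device, so the argument does not close.
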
\noindent Scott observed that a slightly more careful argument could further show that $c(k) = \Omega\left(\frac{1}{k^2 \log k}\right)$.

In this note we give an improved lower bound to $c(k)$ which is optimal up to the (implied) multiplicative constant.
\begin{thm}\label{main}Let $k \ge 2$. Then $c(k) = \Omega(1/k)$. 
\end{thm}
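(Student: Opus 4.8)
The plan is to reduce Theorem~\ref{main} to a structural statement about bipartite graphs, arranging that the factor $1/k$ is lost only in a single ``trimming'' step at the very end.

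The trimming step is elementary. Call an induced subgraph $H_0$ of $G$ \emph{good} if it is a disjoint union of complete bipartite graphs $K_{s_i,t_i}$ with all $s_i,t_i\ge 1$. Given a good $H_0$, replace each block $K_{s_i,t_i}$ by $K_{s_i',t_i'}$, where $s_i'$ and $t_i'$ are the largest positive integers that are at most $s_i$ and $t_i$ respectively and congruent to $1$ modulo $k$. The result is an induced subgraph all of whose degrees are $\equiv 1\pmod k$ (they lie in $\{s_i',t_i'\}$), and since $s_i'\ge s_i/k$ and $t_i'\ge t_i/k$ (a one-line case check according to whether $s_i\le k$) it retains at least a $1/k$-fraction of $|H_0|$. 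Hence it would suffice to cover a constant fraction of $V(G)$ by vertex-disjoint, pairwise non-adjacent good induced subgraphs. Two easy instances already dispose of the extreme regimes: a single edge is good, so $f(G,1,k)\ge\tfrac1{2k}|\{v:\deg_G(v)=1\}|$; and the star on a maximum-degree vertex together with its neighbourhood is good, so $f(G,1,k)\ge(1+\Delta(G))/k$. Thus I may assume that at most an $\varepsilon$-fraction of the vertices of $G$ have degree $1$ and that $\Delta(G)<\varepsilon n$.

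The real content is the intermediate regime, where the degrees of $G$ range over many scales between $2$ and $\varepsilon n$. Here the block reduction above is genuinely insufficient: one can build bipartite graphs of minimum degree $\ge 2$ in which no family of vertex-disjoint, pairwise non-adjacent induced complete bipartite subgraphs covers more than an $O(1/\log n)$-fraction of the vertices, because the neighbourhoods of any set of candidate ``centres'' are forced to overlap heavily. So the desired $H$ cannot in general be assembled out of disjoint, mutually independent pieces, and the argument must be global. I would process $G$ scale by scale — equivalently, induct on $|V(G)|$ — at each stage either extracting a large cheap family of good pieces supported on a single degree scale, or, when the high-degree vertices obstruct this, deleting those vertices together with carefully chosen neighbourhoods and recursing on a graph with substantially fewer vertices and strictly smaller maximum degree. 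A possibly cleaner alternative is a direct inductive/potential-function argument refining Scott's proof of the $1/(2^k+k+1)$ bound, optimising the quantity his argument spends $2^k$ on down to $\Theta(k)$.

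Either way, the hard part will be making the recursion uniformly ``$\Omega(1/k)$-efficient'': the factor $1/k$ must be incurred exactly once — in the trimming — rather than once per recursion level, since a naive recursion loses a further factor of $1/k$ at every step, which is fatal. One must also control the vertices that are orphaned (made isolated) when a neighbourhood is deleted, so that over the whole process they do not accumulate to a non-negligible fraction of $V(G)$. I expect this bookkeeping to be the bulk of the proof, the reductions above being routine.
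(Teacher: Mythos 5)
There is a genuine gap: the proposal stops exactly where the theorem's content begins. Your trimming reduction is fine as far as it goes (the arithmetic $s_i'\ge s_i/k$ checks out, and it correctly disposes of the degree-one and maximum-degree extremes), but it reduces the problem to covering a constant fraction of $V(G)$ by vertex-disjoint, pairwise non-adjacent complete bipartite blocks --- a target you yourself observe is unattainable in general. What you offer in its place (``process $G$ scale by scale'', ``induct on $|V(G)|$'', ``a potential-function argument refining Scott's proof'') is a list of candidate strategies, not an argument: you explicitly defer the decisive issues --- incurring the $1/k$ loss only once rather than once per recursion level, and controlling the orphaned vertices --- to unspecified ``bookkeeping'' that you concede would be ``the bulk of the proof''. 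Since that is precisely the part that distinguishes $\Omega(1/k)$ from the previously known bounds, the proof is not there.

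For comparison, the paper's subgraph is not a disjoint union of independent complete bipartite pieces at all; it is a single global object. One builds a nested chain $W_1\supset W_2\supset\cdots\supset W_{k-1}$ of minimal dominating sets of $V_1$ inside $V_2$, each $W_i$ coming with a set $S_i$ of \emph{private neighbours} ($N(v_w)\cap W_i=\{w\}$), which later serve to adjust the degree of each surviving $w$ by any amount between $0$ and $k-1$. Either $|W_1|\ge c_1|V_1|/k$ and the induced matching $W_1\cup S_1$ already suffices; or one randomly sparsifies $W_{k-1}$, keeping each vertex with probability $1/2$ (when typical degrees into $W_{k-1}$ exceed $k^3$, so a Fourier/mixing estimate gives each $v\in V_1$ probability $\ge(1-o_k(1))/k$ of seeing exactly $1\bmod k$ surviving neighbours) or with probability $2^{-p}$ after a dyadic pigeonhole on $|N(v)\cap W_{k-1}|<k^3$ (giving a constant probability of seeing exactly one neighbour, hence an $\Omega(n/\log k)$ bound). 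The factor $1/k$ is lost exactly once, in the equidistribution estimate --- not in a trimming step. None of these ingredients (nested dominating sets, private-neighbour degree correction, random sparsification with a mod-$k$ equidistribution bound, dyadic pigeonhole on neighbourhood sizes) appears in your outline, so the proposal cannot be credited as a proof of the theorem.
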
 \noindent This is done by taking the improved argument suggested by Scott, and then applying a dyadic pigeonhole argument which was previously overlooked.

\section{Proof of Theorem 2}

We will need the following result on the mixing time of random walks modulo $k$.
\begin{prp}\label{equi} Let $X_i$ be i.i.d. random variables that sample $\{0,1\}$ uniformly at random. If $n\ge k^3$, then $\P\left(\sum_{i=1}^n X_i \equiv 1 \pmod{k}\right)\ge (1-o_k(1))/k$.
\end{prp}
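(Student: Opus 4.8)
The plan is to use a discrete Fourier (``roots of unity'') filter on $\Z/k\Z$. Writing $S_n = \sum_{i=1}^n X_i$ and $\omega = e^{2\pi i/k}$, we have
\[
\P(S_n \equiv 1 \pmod k) = \frac1k\sum_{j=0}^{k-1}\omega^{-j}\,\E\!\left[\omega^{jS_n}\right] = \frac1k\sum_{j=0}^{k-1}\omega^{-j}\left(\frac{1+\omega^j}{2}\right)^{\!n},
\]
where the last equality uses independence of the $X_i$ together with $\E[\omega^{jX_i}] = \tfrac12(1+\omega^j)$. The $j=0$ term contributes exactly $1/k$, so it remains to show that the remaining $k-1$ terms sum to $o_k(1)/k$ in absolute value.

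For $1\le j\le k-1$ one computes $\bigl|\tfrac12(1+\omega^j)\bigr| = |\cos(\pi j/k)|$, and by the symmetry $j\leftrightarrow k-j$ it suffices to bound $\sum_{1\le j\le k/2}\cos(\pi j/k)^n$. Here I would invoke the elementary inequality $\cos\theta\le e^{-\theta^2/2}$ valid on $[0,\pi/2]$ (which holds because $\log\cos\theta + \theta^2/2$ vanishes together with its derivative at $\theta=0$ and has second derivative $1-\sec^2\theta\le 0$ there). This yields $\cos(\pi j/k)^n \le e^{-\pi^2 n j^2/(2k^2)} \le e^{-\pi^2 n j/(2k^2)}$, and summing the geometric series,
\[
\sum_{j=1}^{k-1}\Bigl|\tfrac12(1+\omega^j)\Bigr|^n \;\le\; 2\sum_{j\ge1} e^{-\pi^2 n j/(2k^2)} \;=\; \frac{2\,e^{-\pi^2 n/(2k^2)}}{1-e^{-\pi^2 n/(2k^2)}}.
\]
When $n\ge k^3$ the exponent satisfies $\pi^2 n/(2k^2)\ge \pi^2 k/2\to\infty$, so the right-hand side is $o_k(1)$. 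Substituting back,
\[
\Bigl|\P(S_n\equiv1\pmod k) - \tfrac1k\Bigr| \le \frac1k\sum_{j=1}^{k-1}\Bigl|\tfrac12(1+\omega^j)\Bigr|^n = \frac{o_k(1)}{k},
\]
which gives $\P(S_n\equiv1\pmod k)\ge (1-o_k(1))/k$, as required.

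I do not anticipate a genuine obstacle: the argument is standard harmonic analysis on $\Z/k\Z$. The only points needing a little care are the cosine estimate $\cos\theta\le e^{-\theta^2/2}$ and the observation that the hypothesis $n\ge k^3$ is precisely what forces $n/k^2\to\infty$, making the total contribution of the nonzero frequencies negligible next to $1/k$. (In fact an even cruder bound works: each of the $k-1$ nonzero terms is at most $\cos(\pi/k)^n\le e^{-cn/k^2}\le e^{-ck}$, so their sum is at most $k\,e^{-ck}=o_k(1)$; one could present this in place of the geometric-series estimate if one prefers.)
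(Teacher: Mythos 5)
Your argument is correct, and it is essentially the proof the paper points to but omits: the paper defers to \cite[Lemma~2.3]{ferber} and \cite[Theorem~2 of Chapter~3]{diaconis}, both of which use exactly this discrete Fourier (roots-of-unity) filter on $\Z/k\Z$ with the bound $\bigl|\tfrac12(1+\omega^j)\bigr|=|\cos(\pi j/k)|\le e^{-\pi^2 j^2/(2k^2)}$ and the observation that $n\ge k^3$ makes the nonzero frequencies contribute $o_k(1)$. Your computation checks out at every step, so you have simply filled in the omitted standard proof.
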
\noindent \hide{When outlining how to prove $c(k)\ge \Omega(k^2\log k)$ in \cite{scott2}, Scott mentions that mixing results like the above follow from arguments in \cite{diaconis} (in particular this is essentially a slight modification of \cite[Theorem 2 of Chapter 3]{diaconis}). }Proposition~\ref{equi} is a mild variant of several known results, and $k^3$ could replaced with $k^2\log k$ (or any function which is $\omega(k^2)$). We omit its proof, to keep our paper short and our methods elementary. 

In \cite{scott2}, when Scott outlined how to prove $c(k) \ge \Omega\left(\frac{1}{k^2\log k}\right)$, he noted that Proposition~\ref{equi} (the key to the improvement) can be derived by slightly modifying the argument in \cite[Theorem~2 of Chapter~3]{diaconis}. These appropriate modifications now appear in \cite{ferber}. Namely, the interested reader can confirm that Proposition~\ref{equi} follows from the proof\footnote{In \cite{ferber}, the statement of their lemma hides some constants which are necessary to verify Proposition~\ref{equi}.} of \cite[Lemma~2.3]{ferber}. Both of these proofs rely on discrete Fourier Analysis.

We now proceed to the main proof.
\begin{proof}[Proof of Theorem~\ref{main}]\vspace{-\topsep} Let $G$ be an $n$-vertex bipartite graph with $\delta(G) \ge 1$, and let $V_1,V_2$ bipartition $G$ with $|V_1| \ge |V_2|$. We shall write $c_1,c_2$ to denote small positive quantities which will be determined later (it would suffice to take $c_1=1/4,c_2=1/2$, but for clarity and a slightly better constant we will only consider their values at the end of the proof and shall have them depend slightly on $k$). Our proof splits into three cases.

We take $W_1 \subset V_2$ to be a minimal set satisfying $|N(v)\cap W_1| >0$ for all $v \in V_1$ (i.e., $W_1$ is a minimal dominating set of $V_1$). By minimality of $W_1$, for each $w \in W_1$ there must exist $v_w \in V_1$ where $N(v_w) \cap W_1 = \{w\}$. Let $S_1 = \{v_w:w \in W_1\}$. We conclude that $W_1 \cup S_1$ induces a matching in $G$, proving that $f(G,1,k) \ge 2|W_1|$.

Hence, we will be done if $|W_1| \ge c_1 |V_1|/k$ (this is ``Case 1''). So we continue assuming $|W_1| < c_1 |V_1|/k$. 

For $2\le i \le k-1$, we inductively create sets $W_i,S_i$. We take $W_i\subset W_{i-1}$ to be a minimal dominating set of $V_1\setminus \left(\bigcup_{j=1}^{i-1} S_j\right)$. And like in the above, we take $S_i \subset V_1\setminus \left(\bigcup_{j=1}^{i-1} S_j\right)$ so that $W_i \cup S_i$ induces a matching in $G$.

Let $T = V_1 \setminus \left(\bigcup_{i=1}^{k-1} S_i \right)$. We have
\begin{align*}
    |T| &= |V_1|- \sum_{i=1}^{k-1} |S_i|\\
    &= |V_1|- \sum_{i=1}^{k-1} |W_i|\\
    &\ge  |V_1|- (k-1)|W_1|\\
    &\ge (1-c_1)|V_1|.\\
\end{align*}

Next, let $T^* = \{v \in T: |N(v)\cap W_{k-1}| \ge k^3\}$. Supposing that $|T^*| \ge c_2 |V_1|$ (this is ``Case 2''), we will deduce that $f(G,1,k)\ge (c_2-o_k(1))|V_1|/k$. 

Indeed, let $U\subset W_{k-1}$ be a random subset where each element is included (independently) with probability $1/2$. We set $T_U = \{v \in T: |N(v)\cap U| \equiv 1 \pmod{k}\}$. By Proposition~\ref{equi}, we have that $\P(v \in T_U) \ge (1-o_k(1))/k$ for each $v \in T^*$. Thus by linearity of expectation we may fix some $U\subset W_{k-1}$ where $|T_U| \ge |T^*|(1-o_k(1))/k \ge (c_2-o_k(1))|V_1|/k$. Next choosing $S \subset \bigcup_{i=1}^{k-1}S_i$ so that $|N(u)\cap (T_U\cup S)| \equiv 1 \pmod{k}$ for each $u \in U$, we have that $S\cup U \cup T_U$ induces a subgraph in $G$ demonstrating that $f(G,1,k) \ge |S\cup U \cup T_U| \ge |T_U| \ge (c_2-o_k(1))|V_1|/k$.

Otherwise, we must have that $T\setminus T^*$, the set of $v \in T$ where $|N(v)\cap W_{k-1}| <k^3$, has $>(1-c_1-c_2)|V_1|$ elements (this is ``Case 3''). By dyadic pigeonhole, there exists some $0 \le p\le \log(k^3 ) = O(\log k)$ so that \begin{align*}
    |\{v \in T : 2^p \le  |N(v)\cap W_{k-1}| <2^{p+1}\}| &\ge |T\setminus T^*|/O(\log k) \\
    &\ge (1-c_1-c_2)|V_1|/O(\log k).\\
\end{align*}Take $T' = \{v \in T : 2^p \le  |N(v)\cap W_{k-1}| <2^{p+1}\}$ to be this large set.

We let $U \subset W_{k-1}$ be a random subset so that each element is included (independently) with probability $1/2^p$. Defining $T_U$ as before, some casework\footnote{If $p =0$, then $U = W_{k-1}$ and this probability is one. Otherwise this probability is $\binom{|N(v)\cap W_{k-1}|}{1} (1-2^{-p})^{|N(v)\cap W_{k-1}|} 2^{-p} \ge (1-2^{-p})^{2^{p+1}-1} \ge e^{-2}$.} shows $\P(v \in T_U) \ge e^{-2}$ for each $v \in T'$. Hence, by linearity of expectation, we may fix $U$ so that $|T_U|\ge  e^{-2}|T'|$. As above we may find $S \subset \bigcup_{i=1}^{k-1} S_i$ so that $S\cup U\cup T_U$ demonstrates that $f(G,1,k) \ge |S\cup U \cup T_U| \ge e^{-2}(1-c_1-c_2) |V_1| /O(\log k)$.

Now fix any sufficiently small $\ep > 0$. Letting $c_1 = 1/3-\ep/2, c_2 = 2/3-\ep$, we get that each of the first two cases imply that $f(G,1,k)\ge (2/3-\ep-o_k(1))|V_1|/k \ge (1/3-\ep-o_k(1))n/k$ (since $|V_1| \ge |V_2|$). Meanwhile with $\ep$ fixed, the third case implies $f(G,1,k) = \Omega_{\ep}(n/\log k)$. Taking $\ep \downarrow 0$ as $k \to \infty$ we have that $f(G,1,k) \ge (1/3-o_k(1))n/k$.
\end{proof}

As a closing remark, we note it is still open whether $c(k) =1/k$ for all $k$ (as noted in \cite{scott2}, considering $K_{k,k}$ demonstrates that $c(k) \le 1/k$). Even for $k=2$, the best known bounds are $1/4 \le c(2)\le 1/2$, with the lower bound coming from \cite[Theorem~2]{scott}.

\begin{ack}The author thanks Zachary Chase for spotting some typographical errors in a previous draft of this paper.
\end{ack}


\begin{thebibliography}{}


\bibitem{balister} P. Balister, E. Powierski, A. Scott, and J. Tan, \textit{Counting partitions of $G(n,1/2)$ with degree congruence conditions,} preprint (May 2021), arXiv:2105.12612.

\bibitem{caro} Y. Caro, \textit{On induced subgraphs with odd degrees,} in \textit{Discrete Mathematics,} \textbf{132} (1994), 23–
28.

\bibitem{caro2} Y. Caro, I. Krasikov and Y. Roditty, \textit{Zero-sum partition theorems for
graphs,} in \textit{International Journal of Mathematics and Mathematical Sciences,} \textbf{17} (1994), 697–702.

\bibitem{diaconis} P. Diaconis, \textit{Group representations in probability and statistics,} (1998). 

\bibitem{ferber} A. Ferber, L. Hiaman, M. Krivelevich, \textit{On subgraphs with degrees of prescribed residues in the random
graph,} preprint (July 2021), arXiv:2107.06977.

\bibitem{ferber2} A. Ferber and M. Krivelevich, \textit{Every graph contains a linearly sized induced subgraph with all
degrees odd,} preprint (September 2020), arXiv:2009.05495v3.

\bibitem{lovasz} L. Lovasz, \textit{Combinatorial problems and exercises (2nd edition),} in \textit{AMS Chelsea Publishing} (1993).

\bibitem{scott} A. Scott, \textit{Large induced subgraphs with all degrees odd,} in \textit{Combinatorics, Probability and Computing} \textbf{1} (1992), 335-349.

\bibitem{scott2} A. Scott, \textit{On induced subgraphs with all degrees odd,} in \textit{Graphs and Combinatorics} \textbf{17} (2001), 539–553.




\end{thebibliography}
\end{document}